\newtheorem{theorem}{Theorem}[section]
\newtheorem{corollary}[theorem]{Corollary}
\newtheorem{lemma}[theorem]{Lemma}
\newtheorem{proposition}[theorem]{Proposition}
\theoremstyle{definition}
\newtheorem{definition}[theorem]{Definition}
\theoremstyle{remark}
\numberwithin{equation}{section}
\newcommand{\adj}{\operatorname{adj}}
\newcommand{\diag}{\operatorname{diag}}
\newcommand{\li}[1]{\mathcal{#1}}
\newcommand{\mb}[1]{\mathbf{#1}}
\newcommand{\trace}{\operatorname{tr}}
\newcommand{\wt}{\operatorname{wt}}
\newcommand{\D}{\mathrm{d}}
\DeclarePairedDelimiter\p{\lparen}{\rparen}
\begin{document}

\title{A combinatorial proof of a formula of Biane and Chapuy}

%    Information for first author
\author{Sinho Chewi}
\email{chewisinho@berkeley.edu}
\address{EECS Department, University of California, Berkeley, CA 94720, U.S.A.}
%\thanks{Research supported by the NSF Science and Technology Center grant CCF-0939370:
%``Science of Information"}

%    Information for second author
\author{Venkat Anantharam}
%    Address of record for the research reported here
\email{ananth@berkeley.edu}
\address{271 Cory Hall, EECS Department, University of California, Berkeley, CA 94720, U.S.A.}
%    Current address
%\curraddr{--}
%    \thanks will become a 1st page footnote.
\thanks{Research supported by the NSF Science and Technology Center grant CCF-0939370:
``Science of Information", the NSF grants 
ECCS-1343398, CNS-1527846 and CIF-1618145, and by the William and Flora Hewlett
Foundation supported Center for Long Term Cybersecurity at Berkeley.}

%    General info
% \subjclass[2000]{Primary 54C40, 14E20; Secondary 46E25, 20C20}

\date{\today}

% \dedicatory{...}

\keywords{Directed graph, Markov chain tree theorem, Schr\"{o}dinger matrix, spanning trees, zeta function.}

\begin{abstract}
    Let $G$ be a simple strongly connected weighted directed graph.
Let $\mathcal{G}$ denote the spanning tree graph of $G$. That is, the vertices of $\mathcal{G}$ consist of the directed rooted spanning trees on $G$, and the edges of $\mathcal{G}$ consist of pairs of trees $(t_i, t_j)$ such that $t_j$ can be obtained from $t_i$ by adding the edge from the root of $t_i$ to the root of $t_j$ and deleting the outgoing edge from the root of $t_j$. 
A formula for the ratio of the sum of the weights of the directed rooted spanning trees on $\mathcal{G}$ to the sum of the weights of the directed rooted spanning trees on $G$ was recently given by Biane and Chapuy. We provide an alternative proof of this formula, which is both simple and combinatorial. The proof involves working with the stochastic zeta function of an irreducible Markov chain.
By generalizing the stochastic zeta function we also recover the general result of Biane and Chapuy which gives a formula for the determinant of the Schr\"{o}dinger matrix on $\mathcal{G}$ corresponding to a given Schr\"{o}dinger matrix on $G$, in terms of the minors of the latter matrix.
\end{abstract}

\maketitle

%\section*{This is an unnumbered first-level section head}
%This is an example of an unnumbered first-level heading.

%% The correct journal style for \specialsection is all uppercase; a known bug
%% in amsart.cls prevents this, so input must be uppercase until it is fixed.
%\specialsection*{This is a Special Section Head}
%\specialsection*{THIS IS A SPECIAL SECTION HEAD}
%This is an example of a special section head%
%%%%%%%%%%%%%%%%%%%%%%%%%%%%%%%%%%%%%%%%%%%%%%%%%%%%%%%%%%%%%%%%%%%%%%%%
%\footnote{Here is an example of a footnote. Notice that this %footnote
%text is running on so that it can stand as an example of how a %footnote
%with separate paragraphs should be written.
%\par
%And here is the beginning of the second paragraph.}%
%%%%%%%%%%%%%%%%%%%%%%%%%%%%%%%%%%%%%%%%%%%%%%%%%%%%%%%%%%%%%%%%%%%%%%%%
.

\section{Introduction}

Let $G := (V, E)$ be a simple strongly connected weighted directed graph. Let $x_e$ denote the weight of edge $e$, which we think of as a variable.
$\li{G} := (\li{V}, \li{E})$ denotes the
spanning tree graph of $G$. The vertex set $\li{V}$ is the set of directed rooted spanning trees on $G$. If $t_i, t_j \in \li{V}$ are such that $i$ is the root of $t_i$, $j$ is the root of $t_j$, and $t_j$ can be obtained from $t_i$ by adding the edge $(i, j)$ and removing the outgoing edge from $j$, then the edge $(t_i, t_j)$ belongs to $\li{E}$. We assign this edge in $\li{E}$ the weight $x_{(i, j)}$. Thus $\li{G}$ is also a simple strongly connected weighted directed graph (see \cite[Lemma 9]{propp1998get} for a proof that $\li{G}$ is strongly connected).

In the paragraph above and in the rest of the paper we use $:=$ for equality by definition. 
The term \emph{graph} will always be used to mean a simple weighted directed graph.
Further, the term \emph{spanning tree} (resp.\ \emph{spanning forest}) will always be used to mean a directed rooted spanning tree (resp.\ directed rooted spanning forest).

Define the \emph{weight} of a forest to be the product of the weights of the edges included in the forest.
Recently, Biane and Chapuy \cite{biane2015laplacian}
%,motivated by the probabilistic proof of the Markov chain tree theorem based on lifting the Markov chain to a Markov chain
%on spanning trees \cite{anantharam1989proof,lyonsperes},
consider
a rational function in the edge weights, denoted
$\Phi_G$, which can be interpreted as the ratio of the sum of the weights of spanning trees on $\li{G}$ to the sum of the weights of spanning trees on $G$. 
It is shown that $\Phi_G$ 
is a polynomial 
and a formula for it is given, see \cite[Theorem 3.6]{biane2015laplacian}.

The purpose of this paper is to provide a short combinatorial proof of this formula and of the more general formula of \cite[Theorem 3.5]{biane2015laplacian} which expresses in terms of the minors of a Schr\"{o}dinger matrix on $G$ the determinant of the corresponding Schr\"{o}dinger matrix on $\li{G}$. We defer the introduction of Schr\"{o}dinger matrices to Section \ref{scn:schrodinger}.

Our proof is developed in the framework of irreducible discrete-time Markov chains
on the vertex set $V$ whose positive transition probabilities,
except for self transitions,
correspond to the edges in $E$.
We call such a Markov chain \emph{associated} with $G = (V,E)$.
Our proof uses the connection between the sum of the weights of spanning trees on $G$, when the edge weights are given by the transition probabilities, and the stochastic zeta function of the Markov chain as defined in \cite{lind2001spanning,parrywilliams1977}.
%The construction of $\li{G}$ from $G$ is interesting, for instance, because it yields a 
%probabilistic proof of the Markov chain tree theorem,
%see \cite{anantharam1989proof} for details.
Given an irreducible discrete-time Markov chain with transition probability matrix $P := (p(i, j); i, j \in V)$, where, for $i \neq j$,
we have $p(i,j) > 0$ iff $(i,j) \in E$,
the Laplacian matrix, $L := I - P$, will play a role in the 
discussion. $L$ is a $|V| \times |V|$ matrix with entries
\begin{align*}
    L_{i,j} = \begin{cases}
        -p(i, j), & \mbox{ if $(i, j) \in E$,} \\
        \sum_{k \ne i} p(i, k), & \mbox{ if $i = j$,} \\
        0, & \text{otherwise}.
    \end{cases}
\end{align*}

We begin in Section \ref{scn:cycle} with background information about Biane's polynomial $\Phi_G$ (initially thought of as a rational function) and the stochastic zeta function of an irreducible finite state discrete-time Markov chain. In Section \ref{scn:lift-cycles}, we relate the cycle structure of the spanning tree graph $\li{G}$ to the cycle structure of the original graph $G$. 
In Section \ref{scn:main}, we state and prove our main result, Theorem \ref{thm:main}, which gives an expression for the ratio of the zeta function of $\li{G}$ to that of $G$. 
We then prove Lemma \ref{l:m}, which gives a combinatorial interpretation to the $m(W)$ quantities in \cite{biane2015laplacian}.
The formula for the ratio of the sum of the weights of spanning trees on $\li{G}$ to the sum of the weights of the spanning trees on $G$ that appears in \cite[Theorem 3.6]{biane2015laplacian} then falls out as
Corollary \ref{cor:biane}.

In Section \ref{scn:schrodinger}, we generalize the stochastic zeta function of \cite{lind2001spanning, parrywilliams1977} to a multivariable analogue aimed at accommodating vertex weights. This allows us to give a simple combinatorial proof of the result of \cite[Theorem 3.5]{biane2015laplacian}.

\section{Cycle Structure of the Spanning Tree Graph} \label{scn:cycle}

\subsection{Jacobi's derivative formula.}

We recall Jacobi's formula for the derivative of the determinant of a one-parameter family of matrices \cite{may1965}: if $A(s)$ is a $n \times n$ differentiable matrix, then
\begin{align} \label{eq:jacobi}
    \frac{\D}{\D s} \det A(s) = \trace\p*{ (\adj A(s)) \frac{\D}{\D s} A(s) }.
\end{align}
Here $\adj A(s)$ denotes the adjugate of $A(s)$, see \cite{hornjohnson} for the definition.

\subsection{The polynomial $\Phi_G$.}

Let $P$ be an irreducible transition probability matrix on $V$ whose associated graph is $G$.
Let $\li{P}$ denote the lift of the transition matrix $P$. That is, if $t_i \in \li{V}$ and $t_j \in \li{V}$ have roots $i \in V$ and $j \in V$ respectively and $(t_i, t_j) \in \li{E}$, then we have $\li{P}(t_i, t_j) = p(i,j)$, while we have $\li{P}(t_i, t_i) = p(i,i)$. Note that $\li{P}$ is associated with $\li{G}$. Let $\li{L}$ denote the Laplacian matrix of $\li{P}$.
It is known that the weights $w(t)$, as $t$ ranges over spanning trees $t \in \li{V}$, give an invariant measure for the $\li{P}$-chain, see e.g.\ \cite{anantharam1989proof} for a proof. Further, $\det \li{L}^{(t)}$ as $t$ ranges over spanning trees $t \in \li{V}$, where $\li{L}^{(t)}$ denotes the matrix $\li{L}$ with the row and column corresponding to $t$ removed, is also invariant for $\li{P}$. 
This can be seen as a consequence of Kirchoff's matrix tree theorem, one of whose consequences is that for any irreducible Markov chain the minor of the Laplacian matrix got by erasing the row and column corresponding to a given state is the sum of the weights of the spanning trees rooted at that state,
see e.g.\ \cite{chaikenkleitman1978} for a proof, and the Markov chain tree theorem as proved e.g.\ in \cite{anantharam1989proof}, which states that the stationary distribution is proportional to the sum of such weights. These observations motivate Biane \cite{biane2015polynomials} to define a rational function in the edge weights, denoted $\Phi_G$, by
\begin{align} \label{eq:biane-1}
    \det \li{L}^{(t)} = w(t) \Phi_G.
\end{align}
In fact, it is proved in \cite{biane2015polynomials} that
$\Phi_G$ is a polynomial in the edge weights. This will also be proved as part of our discussion.

By summing over $t \in \li{V}$, we obtain another representation for $\Phi_G$:
\begin{align*}
    \tau(\li{G}) = \tau(G) \Phi_G,
\end{align*}
where $\tau(G)$ and $\tau(\li{G})$ denote the sum of the weights of the spanning trees on $G$ and $\li{G}$ respectively. This is because the sum of $\det \li{L}^{(t)}$ as $t$ ranges over $\li{V}$ equals $\tau(\li{G})$ as a consequence of Kirchoff's matrix tree theorem, as discussed above.
%see \cite{biane2015laplacian} for more details.

\subsection{Stochastic zeta function.}

Let $P$ be the transition probability matrix of an irreducible discrete-time Markov chain on $V$. The stochastic zeta function associated to $P$ is defined by:
\begin{align} \label{eq:zeta}
    \zeta_G(s) := \exp\p*{ \sum_{n=1}^\infty \frac{s^n}{n} \sum_{c \in C_n} \wt(c) } = \frac{1}{\det(I - sP)},
\end{align}
where $C_n$ is the set of cycles of length $n$ and the \emph{weight} of a cycle $c = v_1 \dotsm v_n$ is $\wt(c) := p(v_1, v_2) \dotsm p(v_n, v_1)$. See \cite{lind2001spanning} for more details. We reserve the notation $w(\cdot)$ for the weights of spanning forests and $\wt(\cdot)$ for the weights of cycles to avoid confusion.

The relevance for us of the stochastic zeta function comes from
\cite[pg.\ 491]{lind2001spanning}, where it is shown that
$\tau(G)$ can be expressed 
%in terms of the stochastic zeta function associated with $G$ 
as follows: 
\[
(1/\zeta_G)'(1) = - \tau(G).
\]
Here the derivative on the left is in $s$ and can be thought of either as a formal derivative or as the derivative from the left, since $\zeta_G(s)$ is well defined for all $s$ with absolute value strictly less than $1$.

We are therefore led to consider the ratio between the zeta function of the graph $G$ and the zeta function of the lifted graph $\li{G}$, namely:
\begin{align} \label{eq:ratio}
    R(s) := (\zeta_G/\zeta_{\li{G}})(s).
\end{align}
By differentiating the equation $(1/\zeta_{\li{G}})(s) = R(s)(1/\zeta_G)(s)$ 
%using Jacobi's derivative formula \eqref{eq:jacobi}, 
we obtain:
\begin{align*}
    (1/\zeta_{\li{G}})'(s) = R'(s)(1/\zeta_G)(s) + R(s)(1/\zeta_G)'(s).
\end{align*}
Setting $s = 1$, we find $-\tau(\li{G}) = - R(1) \tau(G)$,
since $(1/\zeta_G)(1) = \det(I - P) = 0$. Therefore,
\begin{align}	\label{eq:PhiGFormula}
    \Phi_G = \frac{\tau(\li{G})}{\tau(G)} = R(1).
\end{align}

\subsection*{Remark.}

Although we will not need this fact for the proof, we note that $R(s)$ is a polynomial in $s$, which follows from the discussion in Section \ref{scn:lift-cycles} below, and we record an observation about the coefficient of the linear term of $R$ which may be of independent interest.

\begin{proposition}
    The coefficient of $s$ in $R(s)$ is the sum of the weights of the self-loops of $G$, minus the sum of the weights of the self-loops of $\li{G}$.
\end{proposition}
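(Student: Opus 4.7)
The plan is to extract the linear coefficient by taking a logarithm. From the definition of $R$ via (\ref{eq:ratio}) and the fact that $\zeta_G(s) = 1/\det(I - sP)$ and $\zeta_{\li{G}}(s) = 1/\det(I - s\li{P})$ both evaluate to $1$ at $s = 0$, we have $R(0) = 1$. Write $R(s) = 1 + a_1 s + a_2 s^2 + \cdots$ so that $R(s) = \exp(\log R(s))$. Since $\log(1 + u) = u - u^2/2 + \cdots$, the coefficient of $s$ in $R(s)$ equals the coefficient of $s$ in $\log R(s)$.

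Using the exponential representation (\ref{eq:zeta}) of the stochastic zeta function, one has
\begin{align*}
    \log R(s) = \log \zeta_G(s) - \log \zeta_{\li{G}}(s) = \sum_{n=1}^\infty \frac{s^n}{n} \left( \sum_{c \in C_n(G)} \wt(c) - \sum_{c \in C_n(\li{G})} \wt(c) \right),
\end{align*}
where $C_n(G)$ and $C_n(\li{G})$ denote the sets of cycles of length $n$ in $G$ and $\li{G}$, respectively. Reading off the $n=1$ term gives
\begin{align*}
    a_1 = \sum_{c \in C_1(G)} \wt(c) - \sum_{c \in C_1(\li{G})} \wt(c).
\end{align*}

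Finally, a cycle of length $1$ is by definition a self-loop, with weight $p(v,v)$ at vertex $v \in V$ in $G$ and weight $\li{P}(t,t)$ at vertex $t \in \li{V}$ in $\li{G}$. Hence $a_1$ is precisely the sum of the self-loop weights of $G$ minus the sum of the self-loop weights of $\li{G}$, as claimed. There is essentially no obstacle: the only slightly non-trivial point is the observation that $[s^1]R(s) = [s^1] \log R(s)$ whenever $R(0) = 1$, and the rest is a direct term-by-term reading of the series for $\log \zeta$.
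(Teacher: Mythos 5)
Your proof is correct, but it takes a different route from the paper's. The paper computes $R'(s)$ directly via Jacobi's derivative formula \eqref{eq:jacobi} applied to the ratio of determinants $\det(I-s\li{P})/\det(I-sP)$, and evaluates at $s=0$ to get $-(\trace\li{P}-\trace P)$. You instead work from the cycle-expansion form \eqref{eq:zeta} of the stochastic zeta function: since $R(0)=1$, the linear coefficient of $R$ equals that of $\log R$, and $\log R(s)$ is exactly the difference of the cycle-weight generating series for $G$ and $\li{G}$, whose $n=1$ term is the difference of self-loop weights. Both arguments are two or three lines; yours avoids matrix calculus entirely and is more in the combinatorial spirit of the rest of the paper (indeed it is the $n=1$ shadow of the computation that opens the proof of Theorem \ref{thm:main}), while the paper's is a quick application of machinery it has already set up. One small point worth making explicit in your version: the termwise identification $\sum_{c\in C_1}\wt(c)=\trace P$ (and likewise for $\li{P}$) is what ties the length-one cycles to the self-loop weights, but this is immediate from the definition of $\wt$.
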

\begin{proof}
    Using Jacobi's matrix identity \eqref{eq:jacobi},
    \begin{align*}
        \frac{\D}{\D s} R(s)
        = \frac{-\trace(\adj(I - s \li{P}) \li{P})}{\det(I - s P)} + \frac{\det(I - s \li{P})}{\det(I - s P)^2} \trace(\adj(I - s P) P).
    \end{align*}
    Setting $s = 0$, we see that the coefficient of $s$ in $R(s)$ is $-(\trace \li{P} - \trace P)$. The claim follows.
\end{proof}

We leave open the question of finding other combinatorial interpretations of $R(s)$.

\subsection{Lifting cycles to the spanning tree graph.} \label{scn:lift-cycles}

The following result about the structure of the spanning tree graph was observed for the case of simple cycles, i.e.\ cycles without repeated vertices, in \cite[Section 2.6]{biane2015laplacian}. We give a new proof of the result which holds for general cycles.

\begin{proposition} \label{prop:lift}
    Each cycle in $\li{G}$ projects down onto a cycle of $G$, given by projecting each spanning tree to its root. Conversely, the number of cycles in $\li{G}$ that project to the cycle $c$ in $G$ is given by the number of spanning forests rooted in the vertices of the cycle.
\end{proposition}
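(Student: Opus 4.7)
The first claim is essentially tautological. By definition of $\li{E}$, an edge $(t_i, t_j)$ exists only when $(i,j) \in E$, where $i, j$ denote the roots of $t_i, t_j$. So projecting a cycle $t_{i_1} \to \cdots \to t_{i_n} \to t_{i_1}$ in $\li{G}$ to the sequence of roots automatically yields a cycle $i_1 i_2 \cdots i_n i_1$ in $G$; no real work is required here.

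For the counting statement, I would fix a cycle $c = v_1 v_2 \cdots v_n v_1$ in $G$, let $W \subseteq V$ denote the set of distinct vertices appearing in $c$, and construct an explicit bijection between cycles in $\li{G}$ projecting to $c$ and spanning forests of $V$ rooted in $W$. The forward direction is straightforward: given a cycle $t_1 \to t_2 \to \cdots \to t_n \to t_1$ in $\li{G}$ projecting to $c$, I observe that any transition $t_k \to t_{k+1}$ only alters outgoing edges at cycle vertices (it adds $(v_k, v_{k+1})$ and deletes the outgoing edge of $v_{k+1}$). Hence the set $F$ of outgoing edges from vertices in $V \setminus W$ is the same in every $t_k$. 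Since $t_1$ is a spanning tree rooted at $v_1 \in W$, following $F$-edges from any $u \in V \setminus W$ produces a path in $V \setminus W$ that first enters $W$ and then halts, so $F$ is a spanning forest of $V$ rooted in $W$.

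The reverse map is where the real work lies. Given such a spanning forest $F$, for each $k$ I plan to define $t_k$ by augmenting $F$ with a specific outgoing edge for every $v \in W \setminus \{v_k\}$, declaring it to be $(v, v_{\tau_k(v)+1})$, where $\tau_k(v) \in \{1,\ldots,n\}$ is the most recent cyclic position strictly before $k$ with $v_{\tau_k(v)} = v$ (well-defined since $v \in W$ and $v \neq v_k$). The key verification is that these added edges together with $F$ form a spanning tree rooted at $v_k$. I would prove this by showing that the map $v \mapsto v_{\tau_k(v)+1}$ brings the associated index $\tau_k$ cyclically strictly closer to $k$ at each iterate, so starting from any $v \in W \setminus \{v_k\}$ the iteration must terminate at $v_k$. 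A parallel check is that $t_k$ and $t_{k+1}$ differ precisely by removing $v_{k+1}$'s outgoing edge in $t_k$ and adding $(v_k, v_{k+1})$; this reduces to the easy observation that $\tau_{k+1}(v) = \tau_k(v)$ for $v \notin \{v_k, v_{k+1}\}$.

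The main obstacle I anticipate is dealing with cycles $c$ that revisit vertices. In the simple-cycle case treated in \cite[Section 2.6]{biane2015laplacian}, the cycle part of each $t_k$ is forced to be the directed path $v_{k+1} \to v_{k+2} \to \cdots \to v_k$, which makes the bijection transparent. Once $c$ revisits a vertex, the outgoing edge of a repeated cycle vertex in $t_k$ may jump along a chord of the cycle determined by that vertex's most recent visit, and the bookkeeping encoded in $\tau_k$ is precisely what is needed to describe this. Establishing the cyclic monotonicity of $\tau_k$ under iteration of $v \mapsto v_{\tau_k(v)+1}$ is the crux of the argument; once that is in hand, the forward and reverse maps are visibly inverse to each other, yielding the claimed count.
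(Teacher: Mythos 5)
Your proposal is correct and follows essentially the same route as the paper: both arguments rest on the observation that, after traversing $c$, the outgoing edge of each $v \in W$ is forced to be the one used at the last exit from $v$ in $c$, yielding a bijection between spanning forests rooted in $W$ and lifted cycles. The only difference is one of bookkeeping: you construct the lift $t_k$ explicitly from the forest via the last-exit map $\tau_k$ and verify the tree property by cyclic monotonicity, whereas the paper starts from an arbitrary tree containing the forest and lets the traversal dynamics produce the unique fixed point, so that the tree property comes for free.
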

\begin{proof}
    The first statement, that cycles in $\li{G}$ project down to cycles in $G$, is clear. For the second statement, let $c$ be a cycle in $G$, let $W$ denote the set of vertices of $c$, and fix a spanning forest $f$ rooted in $W$. Pick an arbitrary starting vertex $w \in W$. We claim that for any spanning tree $t \in \li{V}$ rooted at $w$ such that $f \subseteq t$, the path in $\li{G}$ starting from $t$ obtained by traversing each edge in $c$ ends at the same spanning tree $t_f$. Indeed, the edges which are in $f$ are unchanged throughout the path, and the outgoing edge of each $v \in W \setminus \{w\}$ at the end of the path is uniquely determined by the last transition out of $v$ in $c$. We have exhibited a bijection $f \mapsto t_f$, where $t_f$ is the unique starting vertex in $\li{G}$ containing $f$ for which the cycle $c$ in $G$ lifts to a cycle in $\li{G}$, which suffices to prove the claim.
\end{proof}

\section{Main Result} \label{scn:main}

We are ready to state and prove our main result.

\begin{theorem} \label{thm:main}
    $R(s)$, defined in \eqref{eq:ratio}, satisfies
    \begin{align*}
        R(s) = \prod_{\text{strongly connected } W \subset V} \det((I - sP)^{(V \setminus W)})^{m'(W)},
    \end{align*}
    where the product ranges over all proper strongly connected subsets of $V$. Here, $(I - sP)^{(V \setminus W)}$ denotes the matrix $I - sP$ with the rows and columns corresponding to $V \setminus W$ removed, and $m'(W)$ is defined recursively as
    \begin{align} \label{eq:m}
        m'(W) :=
        \begin{cases}
            1, & W = V \\
            k(W) - \sum_{W' \supset W} m'(W'), & W \ne V
        \end{cases}
    \end{align}
    where $k(W)$ is the number of spanning forests rooted in $W$.
\end{theorem}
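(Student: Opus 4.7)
The approach is to take logarithms and work term by term with the cycle expansion of the zeta function. From \eqref{eq:zeta},
\begin{align*}
    \log R(s) = \sum_{n=1}^\infty \frac{s^n}{n}\Bigl(\sum_{c \in C_n(G)} \wt(c) - \sum_{\tilde c \in C_n(\li{G})} \wt(\tilde c)\Bigr).
\end{align*}
By Proposition \ref{prop:lift}, each cycle $c$ in $G$ with vertex set $W(c)$ lifts to exactly $k(W(c))$ cycles in $\li{G}$, each of the same weight as $c$. Hence the parenthesized difference equals $-\sum_c (k(W(c))-1)\wt(c)$. I would then regroup the sum according to the vertex set $W$ of $c$, which is automatically strongly connected, by introducing
\begin{align*}
    A(W) := \sum_{\substack{c \in C(G)\\ W(c) = W}} \frac{s^{|c|}}{|c|}\,\wt(c).
\end{align*}
This gives $\log R(s) = -\sum_{W} (k(W) - 1)\,A(W)$, where $W$ ranges over the strongly connected subsets of $V$; the term $W = V$ drops out since $k(V) = 1$.

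Next I would connect the factors $\det((I - sP)^{(V \setminus W)})$ to these $A(W)$'s. Applying the identity \eqref{eq:zeta} to the induced subgraph on $W$ yields
\begin{align*}
    B(W) := -\log \det\bigl((I - sP)^{(V \setminus W)}\bigr) = \sum_{\substack{U \subseteq W\\ U \text{ s.c.}}} A(U),
\end{align*}
since a cycle of $G$ lies inside $G[W]$ iff its vertex set is a strongly connected subset of $W$. The product formula we wish to prove is then equivalent to the additive identity
\begin{align*}
    \sum_{\substack{W \subsetneq V\\ W \text{ s.c.}}} m'(W)\,B(W) = \sum_{\substack{W \subsetneq V\\ W \text{ s.c.}}} (k(W) - 1)\,A(W).
\end{align*}

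Substituting $B(W) = \sum_{U \subseteq W} A(U)$ on the left and swapping summation orders, the identity reduces to the system
\begin{align*}
    \sum_{\substack{W : U \subseteq W \subsetneq V\\ W \text{ s.c.}}} m'(W) = k(U) - 1, \qquad \text{for every proper strongly connected } U \subseteq V.
\end{align*}
This is precisely the relation engineered by \eqref{eq:m}: by induction on $|V \setminus U|$, starting from the base case $m'(V) = 1$, the recursion forces $\sum_{W \supseteq U,\,W \text{ s.c.}} m'(W) = k(U)$, and subtracting off the contribution $m'(V) = 1$ at $W = V$ yields the displayed equality. Exponentiating then produces the product representation claimed by the theorem.

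The main obstacle I anticipate is the M\"{o}bius-style bookkeeping in this last step, specifically confirming that the recursive definition \eqref{eq:m} exactly inverts the relation $B(W) = \sum_{U \subseteq W} A(U)$ restricted to proper strongly connected subsets, and correctly handling the boundary term $W = V$ so that the product is taken over the claimed range. Once that alignment is verified the remainder is formal manipulation of power series in $s$, valid at least in a neighborhood of $s = 0$ where all logarithms converge absolutely.
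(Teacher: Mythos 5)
Your proposal is correct and follows essentially the same route as the paper: expand $\log R(s)$ via the cycle representation of the zeta function, apply Proposition \ref{prop:lift} to get the factor $k(W)-1$, regroup by the (strongly connected) vertex set of each cycle, invert the containment relation using exactly the recursion \eqref{eq:m}, and identify $\exp\{\sum_{c}\wt(c)s^{|c|}/|c|\}$ over cycles confined to $W$ with $\det((I-sP)^{(V\setminus W)})^{-1}$. The only cosmetic difference is that you phrase the over-counting correction as an explicit M\"obius-style inversion with $A(W)$ and $B(W)$, whereas the paper writes it as replacing the sets $C_W$ by $\tilde C_W$; the bookkeeping you flag as the main obstacle is precisely the paper's identity \eqref{eq:m-condition} and goes through as you describe.
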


For the rest of the section, $W$ and $W'$ will be used exclusively to mean a strongly connected subset of $V$, that is, all products and summations involving $W$ or $W'$ should be interpreted as ranging over strongly connected subsets only.

\begin{proof}[Proof of Theorem \ref{thm:main}]
    Let $C_n$ and $\li{C}_n$ denote the set of cycles of length $n$ in $G$ and $\li{G}$ respectively. Similarly, let $C_W$ (resp.\ $\li{C}_W$) denote the set of cycles $c$ in $G$ (resp.\ $\li{G}$) such that the set of vertices in $c$ (resp.\ the set of roots of the vertices in $c$) equals $W$. From the representation \eqref{eq:zeta},
    \begin{align*}
        R(s) &= \exp\left\{ \sum_{n=1}^\infty \frac{s^n}{n} \left( \sum_{c \in C_n} \wt(c) - \sum_{c \in \li{C}_n} \wt(c) \right)\right\} \\
             &= \exp\left\{ \sum_{W \subseteq V, W \ne \varnothing} \left(\sum_{c \in C_W} \frac{\wt(c) s^{|c|}}{|c|} - \sum_{c \in \li{C}_W} \frac{\wt(c) s^{|c|}}{|c|} \right)\right\},
    \end{align*}
    where $|c|$ denotes the length of $c$. From Proposition \ref{prop:lift}, each cycle $c$ that appears in the first summation appears exactly $k(W)$ times in the second summation. So,
    \begin{align} \label{eq:ratio-2}
        R(s) &= \exp\left\{ - \sum_{W \subseteq V, W \ne \varnothing} (k(W)-1) \sum_{c \in C_W} \frac{\wt(c) s^{|c|}}{|c|} \right\}.
    \end{align}

    We seek to replace the set $C_W$ in the summation with the set $\tilde{C}_W$, where $\tilde{C}_W$ is the set of cycles $c$ which only use vertices from $W$. The difference between $C_W$ and $\tilde{C}_W$ is that $\tilde{C}_W$ includes cycles which only use a proper subset of $W$, whereas $C_W$ only includes cycles which use every vertex in $W$ at least once. So, when we replace $C_W$ with $\tilde{C}_W$, we introduce over-counting. Indeed, for $c \in C_W$, we have $c \in \tilde{C}_{W'}$ for each $W' \supseteq W$. We claim:
    \begin{align} \label{eq:km}
        \sum_{\substack{W \subset V \\ W \ne \varnothing}} (k(W) - 1) \sum_{c \in C_W} \frac{\wt(c) s^{|c|}}{|c|} = \sum_{\substack{W \subset V \\ W \ne \varnothing}} m'(W) \sum_{c \in \tilde{C}_W} \frac{\wt(c) s^{|c|}}{|c|}.
    \end{align}
    Note that we restrict our attention to proper subsets of $V$, since $k(V) - 1 = 0$. To check that \eqref{eq:km} correctly adjusts for the over-counting, observe that $m'(W)$, as defined in \eqref{eq:m}, satisfies
    \begin{align} \label{eq:m-condition}
        k(W) - 1 = \sum_{\substack{W' \subset V \\ W \subseteq W'}} m'(W'), \qquad W \subset V, \; W \ne \varnothing.
    \end{align}
    We now have
    \begin{align*}
        R(s)
        &= \prod_{\substack{W \subset V \\ W \ne \varnothing}} \exp\left\{ - m'(W) \sum_{c \in \tilde{C}_W} \frac{\wt(c) s^{|c|}}{|c|} \right\} \\
        &= \prod_{\substack{W \subset V \\ W \ne \varnothing}} \exp\left\{ \sum_{c \in \tilde{C}_W} \frac{\wt(c) s^{|c|}}{|c|} \right\}^{-m'(W)}.
    \end{align*}
    We can rewrite the last expression by using:
    \begin{align*}
        \exp\left\{ \sum_{c \in \tilde{C}_W} \frac{\wt(c) s^{|c|}}{|c|} \right\}
        &= \exp\left\{ \sum_{n=1}^\infty \frac{s^n}{n} \trace\p*{ (P^{(V \setminus W)})^n } \right\} \\
        &= \exp\left\{ \sum_{n=1}^\infty \frac{s^n}{n} \sum_{i=1}^{|W|} \lambda_i^n \right\} \\
        &= \exp\left\{ \sum_{i=1}^{|W|} \log \frac{1}{1 - s \lambda_i} \right\}
        = \prod_{i=1}^{|W|} \frac{1}{1 - s \lambda_i} \\
        &= \frac{1}{\det(I - s P^{(V \setminus W)})},
    \end{align*}
    where $\lambda_i$, $i = 1, \dotsc, |W|$, are the eigenvalues of $P^{(V \setminus W)}$. 
    %To justify the calculation, we remark that $P^{(V \setminus W)}$ is weakly chained diagonally dominant and hence non-singular.
Now, in the product expression for $R(s)$ in the statement of the theorem, we can let $W = \varnothing$ if we wish, with the convention $\det(I - s P^{(V)}) = 1$. The result follows.
\end{proof}

Our next goal is to see how Theorem \ref{thm:main} implies the result of \cite[Theorem 3.6]{biane2015laplacian}.

In \cite{biane2015laplacian}, Biane and Chapuy describe an exploration algorithm, which we briefly summarize as follows: Fix a spanning tree $t \in \li{V}$ with root $w \in V$ and an arbitrary ordering of the vertices. Perform a modified breadth-first traversal of the graph (following incoming edges in order of increasing source vertex) such that if a vertex $v$ is first visited in the traversal via an edge that is not included in $t$, then delete $v$ and all edges associated with $v$ from the graph before continuing the traversal. Let $\psi(t)$ denote the strongly connected subset containing $w$ among the remaining vertices after the algorithm terminates. For a non-empty strongly connected subset $W \subseteq V$, fix some $w \in W$ and let $m(W, w)$ denote the number of spanning trees $t$ rooted at $w$ such that $\psi(t) = W$.

It is proved in \cite{biane2015laplacian} that the quantity $m(W, w)$ does not depend on the choice of $w$, nor on the choice of ordering of the vertices during the traversal. We give another proof of this fact by giving $m(W, w)$ a different combinatorial interpretation. Instead of working with an ordering on the vertices, we instead give the proof for the more general setting in which the exploration algorithm is carried out with an arbitrary ordering on the edges.

\begin{lemma} \label{l:m}
    Fix a non-empty strongly connected subset $W \subseteq V$ and a vertex $w \in W$. The number of spanning trees $t$ rooted at $w$ such that $W \subseteq \psi(t)$ equals the number of spanning forests rooted at $W$.
\end{lemma}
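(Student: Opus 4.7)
The plan is to exhibit an explicit bijection $\Phi$ between the set of spanning trees $t$ rooted at $w$ satisfying $W \subseteq \psi(t)$ and the set of spanning forests rooted at $W$. The forward direction is easy: define $\Phi(t)$ by deleting from $t$ the unique outgoing $t$-edge of every vertex $v \in W \setminus \{w\}$. The resulting edge set is a spanning forest whose roots are exactly $W$, since every vertex of $W$ now has no outgoing edge while every vertex of $V \setminus W$ still has exactly one outgoing edge (its outgoing $t$-edge), and acyclicity is inherited from $t$.

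For the inverse, given a spanning forest $f$ rooted at $W$, I would reconstruct $t$ by simulating the exploration algorithm while using $f$ to handle $V \setminus W$ and recording the discovery edges for $W \setminus \{w\}$. Run a BFS from $w$; when processing a vertex $u$, scan its incoming edges $(v,u)$ in the prescribed order, skipping any $v$ that is already visited or deleted. For $v \in V \setminus W$, visit $v$ if $\mathrm{parent}_f(v) = u$ and otherwise delete $v$. For $v \in W \setminus \{w\}$, always visit $v$ and record $\mathrm{parent}_t(v) := u$. Finally set $t := f \cup \{(v,\mathrm{parent}_t(v)) : v \in W \setminus \{w\}\}$.

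I then need to verify three facts. First, every $v \in W \setminus \{w\}$ is eventually visited and therefore receives a parent; this is where the strong connectedness of $W$ enters, since it implies that the BFS within $W$ on the reverse graph of $G$ reaches all of $W$ regardless of what happens to $V \setminus W$. Second, $t$ is a spanning tree rooted at $w$: each non-$w$ vertex has exactly one outgoing edge in $t$, and every assigned parent is BFS-earlier than its child, which precludes cycles and forces each path of outgoing edges to terminate at $w$. Third, running the original exploration algorithm on the constructed $t$ reproduces the same visit/delete sequence as the simulation, so each $v \in W \setminus \{w\}$ is first discovered via the $t$-edge $(v,\mathrm{parent}_t(v))$ and hence $W \subseteq \psi(t)$.

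The main obstacle is the well-definedness of the inverse, which hinges entirely on the fact that $W$ is strongly connected; without this assumption the BFS could be prevented from discovering some $v \in W \setminus \{w\}$ by deletions in $V \setminus W$. Granted well-definedness, checking that $\Phi$ and this construction are mutually inverse is straightforward: applied to $\Phi(t)$ for $t$ with $W \subseteq \psi(t)$, the construction performs exactly the BFS that the exploration algorithm would perform on $t$, and $W \subseteq \psi(t)$ forces the first BFS encounter of each $v \in W \setminus \{w\}$ to be via the $t$-edge $(v,\mathrm{parent}_t(v))$, so the recorded parents coincide with those already in $t$.
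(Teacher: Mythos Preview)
Your proposal is correct and follows essentially the same route as the paper: the inverse map you describe is exactly the paper's construction $f \mapsto t_f$ via a simulated BFS, and your forward map $\Phi$ is the restriction $t \mapsto f$ that the paper uses implicitly when it asserts ``for each $f$ there is exactly one $t$ with $f \subseteq t$ and $\psi(t) \supseteq W$.'' One small point worth tightening: the claim ``every assigned parent is BFS-earlier than its child'' directly covers only vertices the BFS actually visits; for a vertex $v \in V \setminus W$ that is deleted or never reached, you should instead note that the $f$-path from $v$ lands in $W$ (since $f$ is rooted there), after which the BFS-order reasoning applies and the path terminates at $w$.
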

\begin{proof}
    We will prove that for each spanning forest $f$ rooted at $W$, there is exactly one spanning tree $t$ on $V$ rooted at $w$ such that $f \subseteq t$ and $\psi(t) \supseteq W$, from which the claim follows.

    Fix a spanning forest $f$. Generate a spanning tree $t_f$ on $V$ by running a breadth-first traversal (following incoming edges) starting from $w$ with the following rules:
    \begin{itemize}
        \item Initialize $t_f$ with the edges in $f$.
        \item If a vertex $v \in V \setminus W$ is first visited through an edge not included in $f$, then delete the vertex $v$ and all of its associated edges.
        \item If a vertex $v \in W$ is first visited through edge $e$, add $e$ to $t_f$.
    \end{itemize}
    By construction, running the exploration algorithm on $t_f$ yields $\psi(t_f) \supseteq W$. Conversely, suppose that $t$ is another spanning tree on $V$ rooted at $w$ with $f \subseteq t$, $t \ne t_f$. Let $v$ denote the first vertex in $W$ visited during the construction of $t_f$ such that the outgoing edge from $v$ is different in $t$ and $t_f$. Then, the exploration algorithm on $t$ will delete the vertex $v$ during the traversal, and so we have $W \not\subseteq \psi(t)$.
\end{proof}

\begin{proposition} \label{prop:m-m}
    For non-empty $W$, for all $w \in W$, $m(W, w) = m'(W)$. In particular, $m(W, w)$ does not depend on the choice of $w$, nor on the ordering of the edges.
\end{proposition}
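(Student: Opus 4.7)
The plan is to prove the proposition by induction on $|V \setminus W|$, using Lemma \ref{l:m} as the key combinatorial input. For the base case $W = V$, the only spanning forest rooted at $V$ is the empty one, so $k(V) = 1$, matching $m'(V) = 1$; Lemma \ref{l:m} then forces $m(V, w) = 1$ as well.

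For the inductive step, the central observation is that by construction of the exploration algorithm, $\psi(t)$ is always a strongly connected subset of $V$ containing the root $w$ of $t$. Consequently, the set of spanning trees $t$ rooted at $w$ with $W \subseteq \psi(t)$ partitions according to the value of $\psi(t)$, which ranges over strongly connected $W' \supseteq W$ (each automatically containing $w$). Combining this partition with the count provided by Lemma \ref{l:m} gives
\[
k(W) = \sum_{W' \supseteq W} m(W', w) = m(W, w) + \sum_{W' \supset W} m(W', w).
\]
Applying the inductive hypothesis $m(W', w) = m'(W')$ for each strongly connected $W' \supset W$ and solving for $m(W, w)$ yields $m(W, w) = k(W) - \sum_{W' \supset W} m'(W')$, which is exactly the recursion \eqref{eq:m} defining $m'(W)$. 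The independence of $m(W, w)$ from the choice of $w \in W$ and from the edge ordering then follows immediately, since $m'(W)$ depends only on $W$.

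I do not anticipate any serious obstacle: the argument is essentially a M\"obius-style inversion, with Lemma \ref{l:m} supplying the count and the recursion for $m'$ designed precisely to invert it. The one point worth checking carefully is that $\psi(t)$ is genuinely strongly connected and contains $w$, so that summing $m(W', w)$ over strongly connected $W' \supseteq W$ truly accounts for every tree counted by Lemma \ref{l:m}; this is immediate from how the exploration algorithm is defined.
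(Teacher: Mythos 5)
Your proof is correct and follows essentially the same route as the paper's: both arguments rest on the identity $k(W) = \sum_{W' \supseteq W} m(W', w)$ obtained from Lemma \ref{l:m} by partitioning the trees according to $\psi(t)$, followed by backwards induction on $W$ using the recursion defining $m'$. The only (minor, and slightly cleaner) difference is your base case, where you deduce $m(V,w)=1$ directly from Lemma \ref{l:m} and $k(V)=1$ rather than citing the corresponding lemma of Biane and Chapuy.
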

\begin{proof}
Fix an ordering on the edges. We prove the claim by backwards induction on the size of $W$.

By definition, $m'(V) = 1$, and indeed $m(V, w) = 1$ for any $w \in V$ (see \cite[Lemma 3.3]{biane2015laplacian}). Now, suppose that the claim holds for all $W' \supset W$ for some $W$. Fix $w \in W$. From Lemma \ref{l:m}, $k(W) = \sum_{W' \supseteq W} m(W', w)$, so
\begin{align*}
    m(W, w) = k(W) - \sum_{W' \supset W} m(W', w) = k(W) - \sum_{W' \supset W} m'(W') = m'(W)
\end{align*}
by the inductive claim and the definition of $m'$. Also, $m'(W)$ does not depend on the choice of $w$ nor on the ordering on the edges, which establishes the result.
\end{proof}

In light of Proposition \ref{prop:m-m}, we write $m(W)$ for $m(W, w)$, as also done in \cite{biane2015laplacian}, and have established $m(W) = m'(W)$ for $W \ne \varnothing$. Consequently, we have:

\begin{corollary}[Theorem 3.6 in \cite{biane2015laplacian}] \label{cor:biane}
    The polynomial $\Phi_G$ satisfies
    \begin{align}	\label{eq:PhiGBiane}
        \Phi_G = \prod_{\text{strongly connected } W \subset V} (\Psi_{V \setminus W})^{m(W)},
    \end{align}
    where the product ranges over all proper strongly connected subsets of $V$ and $\Psi_{V \setminus W}$ is the sum of the weights of spanning forests rooted at $V \setminus W$.
\end{corollary}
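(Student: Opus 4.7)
\emph{Plan.}  The corollary should drop out immediately by combining the ingredients already in place: equation \eqref{eq:PhiGFormula}, Theorem \ref{thm:main} at $s=1$, Kirchhoff's matrix--tree theorem, and Proposition \ref{prop:m-m}.  The plan is first to pass from $R(1)$ to a product of principal minors of $I-P$, next to reinterpret those minors as generating polynomials for spanning forests, and finally to identify the exponents $m'(W)$ with the quantities $m(W)$ appearing on the right-hand side of \eqref{eq:PhiGBiane}.

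In more detail, by \eqref{eq:PhiGFormula} we have $\Phi_G = R(1)$, and Theorem \ref{thm:main} specialized to $s=1$ therefore yields
\[
\Phi_G \;=\; \prod_{W} \det\bigl((I-P)^{(V\setminus W)}\bigr)^{m'(W)},
\]
the product ranging over proper nonempty strongly connected subsets $W$ of $V$ (the $W = \varnothing$ factor being $1$ by convention).  Next, invoke the weighted directed form of Kirchhoff's matrix--tree theorem already cited after \eqref{eq:biane-1}: for any $S \subseteq V$, the principal minor of the Laplacian $L = I - P$ obtained by deleting the rows and columns indexed by $S$ equals the sum of the weights of those spanning forests on $V$ whose root set is exactly $S$.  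Applying this with $S = V\setminus W$ identifies each factor with $\Psi_{V\setminus W}$.  Finally, Proposition \ref{prop:m-m} permits the substitution $m'(W) = m(W)$, which produces the claimed formula \eqref{eq:PhiGBiane}.

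The principal obstacle is really only a bookkeeping one: one must remember that in the matrix--tree theorem it is the \emph{removed} index set that specifies the roots of the forests being enumerated, so the removed set must be $V\setminus W$ (not $W$) in order to match the quantity $\Psi_{V\setminus W}$.  As a pleasant by-product, the right-hand side of \eqref{eq:PhiGBiane} is manifestly a polynomial in the edge weights $x_e$, so the argument simultaneously re-confirms that $\Phi_G$ is a polynomial --- the assertion made, but not yet verified, in the discussion following \eqref{eq:biane-1}.
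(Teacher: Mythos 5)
Your proposal is correct and follows exactly the paper's route: $\Phi_G = R(1)$ from \eqref{eq:PhiGFormula}, Theorem \ref{thm:main} at $s=1$, the matrix forest theorem to identify $\det((I-P)^{(V\setminus W)})$ with $\Psi_{V\setminus W}$, and Proposition \ref{prop:m-m} to replace $m'(W)$ by $m(W)$. Your bookkeeping note about the removed index set being the root set is accurate and matches the paper's convention.
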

\begin{proof}
    This is a direct consequence of Theorem \ref{thm:main} (set $s=1$), Proposition \ref{prop:m-m}, and the Kirchhoff-Chaiken-Chen matrix forest theorem, which states that $\det(I - P^{(W)})$ is the sum of the weights of spanning forests rooted at $W$, see e.g.\ \cite{chaikenkleitman1978} or \cite{pitman2016tree}.
\end{proof}

\subsection*{Remark.}

Once one recognizes that $R(s)$, as defined in \eqref{eq:ratio}, satisfies \eqref{eq:PhiGFormula} the proof of \eqref{eq:PhiGBiane} could have been carried out along the lines of \cite[Corollary 2.3]{athanasiadis1996}. Our proof however brings out the connections with the exploration algorithm of \cite{biane2015laplacian}. Further, as shown in Section \ref{scn:schrodinger} below, our technique leads to a simple combinatorial proof of the more general result of \cite[Theorem 3.5]{biane2015laplacian} concerning Schr\"{o}dinger matrices.

\section{Extension to Schr\"{o}dinger Matrices} \label{scn:schrodinger}

In addition to the set of variables $\{x_e : e \in E\}$ associated with the edges of the graph $G = (V,E)$, let $\{y_v : v \in V\}$ be a set of variables associated with the vertices of $G$. The corresponding \textit{Schr\"{o}dinger matrix} is the $|V| \times |V|$ matrix $H := Q + Y$, where
\begin{align*}
    Q(i, j) := \begin{cases}
        - x_{(i, j)}, & \mbox{ if $(i,j) \in E$,} \\
        \sum_{k : (i,k) \in E} x_{(i, k)}, & \mbox{ if $i = j$,}
    \end{cases} 
\end{align*}
and
\[
Y := \diag(y_1, \dotsc, y_{|V|}).
\]

In \cite{biane2015laplacian} Biane and Chapuy consider the Schr\"{o}dinger matrix on 
$\li{G} = (\li{V}, \li{E})$ corresponding to $H$ on $G = (V,E)$. To define this, associate the edge weight $x_{(i,j)}$ to $(t_i,t_j) \in \li{E}$, where $t_i$ has root $i$ and $t_j$ has root $j$, and associate the vertex weight $y_i$ to $t_i \in \li{V}$ when $t_i$ has root $i$.

The lifted counterpart of $H$ is then $\li{H} := \li{Q} + \li{Y}$, where:
%\begin{itemize}
    %\item 
    
    $\li{Q}(t_i, t_j) := -x_{(i, j)}$ if $t_i \ne t_j$ and $t_j$ is the spanning tree got from $t_i$ by adding the edge $(i, j)$ and removing the outgoing edge from $j$ (where $i$ and $j$ are the roots of $t_i$ and $t_j$ respectively); otherwise, $\li{Q}(t_i, t_i) := \sum_{t \ne t_i} \li{Q}(t_i, t)$;
    %\item 
    
     $\li{Y}$ is the diagonal matrix with $(t_i, t_i)$ entry equal to $y_i$, where $i$ is the root of $t_i$.\\
%\end{itemize}
Note that we denote Schr\"{o}dinger matrices on $G$ (resp.\ $\li{G}$) by $H$ (resp.\ $\li{H}$) instead of $L$ (resp.\ $\li{L}$) as was done in \cite{biane2015laplacian}, to avoid confusion with the respective Laplacian matrices.

We now prove the analogue of Theorem \ref{thm:main} with Schr\"{o}dinger matrices. We would like to thank Biane and Chapuy for suggesting that we study this more general problem, after seeing an earlier version of this document.

\subsection{Generalization of the stochastic zeta function.}

Given the graph $G = (V, E)$, let $\mb{s} := (s_1, \dotsc, s_{|V|})$ denote a vector of vertex weights. For a cycle $c \in C_n$, with $c = v_1 \dotsm v_n$, its \textit{$\mb{s}$-weight} is defined to be $\wt(c; \mb{s}) := s_{v_1} \dotsm s_{v_n} p(v_1, v_2) \dotsm p(v_n, v_1)$.

\begin{definition}
    Given a Markov chain associated with the graph $G = (V,E)$ and having the transition probability matrix $P$, the \textit{vertex-weighted stochastic zeta function} associated with $P$ is defined as:
    \begin{align*}
        \zeta_G(\mb{s}) := \exp\p*{ \sum_{n=1}^\infty \frac{1}{n} \sum_{c \in C_n} \wt(c; \mb{s}) }.
    \end{align*}
\end{definition}

By setting $s_1 = \cdots = s_{|V|} = s$, we recover the single-variable stochastic zeta function \eqref{eq:zeta}. Next, we give the corresponding determinant expression for $\zeta_G(\mb{s})$.

\begin{theorem} \label{thm:zeta-multi}
    $\zeta_G(\mb{s})$ satisfies
    \begin{align} \label{eq:zeta-multi}
        \zeta_G(\mb{s}) = \frac{1}{\det(I - SP)},
    \end{align}
    where $S := \diag(s_1, \dotsc, s_{|V|})$.
\end{theorem}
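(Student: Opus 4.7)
The plan is to mimic the proof of the single-variable identity $\zeta_G(s) = 1/\det(I - sP)$ recalled in \eqref{eq:zeta}, replacing the scalar $s$ by the diagonal matrix $S$. Working in the formal power series ring in the variables $s_1,\dotsc,s_{|V|}$ (which avoids any convergence issue, although one could alternatively restrict to a neighborhood of the origin), I would start from the standard matrix identity
\[
-\log \det(I - A) = \trace \log (I - A)^{-1} = \sum_{n=1}^\infty \frac{1}{n} \trace(A^n),
\]
applied with $A := SP$, and exponentiate to obtain
\[
\frac{1}{\det(I - SP)} = \exp\p*{\sum_{n=1}^\infty \frac{1}{n} \trace\bigl((SP)^n\bigr)}.
\]

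The main step is then to identify $\trace((SP)^n)$ with $\sum_{c \in C_n} \wt(c;\mb{s})$. Since $S = \diag(s_1,\dotsc,s_{|V|})$, the $(i,j)$-entry of $SP$ is $s_i p(i,j)$, so expanding the trace as a sum over closed walks of length $n$ gives
\[
\trace\bigl((SP)^n\bigr) = \sum_{v_1,\dotsc,v_n \in V} s_{v_1} p(v_1,v_2) s_{v_2} p(v_2,v_3) \dotsm s_{v_n} p(v_n,v_1).
\]
Each term on the right is precisely the $\mb{s}$-weight $\wt(c;\mb{s})$ of the cycle $c = v_1 v_2 \dotsm v_n \in C_n$ (with the convention that a cycle of length $n$ is a closed walk of length $n$, as in the definition of $C_n$ used in \eqref{eq:zeta}). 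Substituting into the exponential identity above and comparing with the definition of $\zeta_G(\mb{s})$ gives \eqref{eq:zeta-multi}.

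There is no substantive obstacle: the proof is essentially bookkeeping, with the only minor point being that one should interpret the equality either formally in $\mathbb{Q}[\![s_1,\dotsc,s_{|V|}]\!]$ or, analytically, for $\mb{s}$ in a sufficiently small polydisc around the origin, where convergence is guaranteed by the fact that $P$ is row-stochastic. The key conceptual observation driving the whole argument is that pushing $S$ across consecutive factors of $P$ in the product $(SP)^n$ distributes exactly one factor of $s_v$ at each vertex visit of a closed walk, which is precisely how the $\mb{s}$-weight of a cycle was defined.
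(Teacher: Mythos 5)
Your proof is correct and follows essentially the same route as the paper: both arguments reduce to the identity $\trace\bigl((SP)^n\bigr)=\sum_{c\in C_n}\wt(c;\mb{s})$ combined with the $\log\det$--$\trace$ relation. The only difference is presentational: the paper derives that relation from scratch by applying Jacobi's formula to $C(u)=\det(I-uSP)$ and integrating in the auxiliary scalar $u$, whereas you invoke the standard identity $-\log\det(I-A)=\sum_{n\ge1}\trace(A^n)/n$ directly.
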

\begin{proof}
    %The action of $S$ on $P$ is to scale the weights of the transitions leaving vertex $i$ by the factor $s_i$.
    %We can consider a weighting function $w$ on the edges of $G$ such that $w(i, j) := s_i p(i, j)$.
    %It is clear that for a cycle $c = v_1 \dotsm v_n$, $\wt(c; \mb{s}) = w(v_1, v_2) \dotsm w(v_n, v_1)$.
   Let $C(u) := \det(I - u SP)$, where $u$ is a variable. From the Jacobi derivative formula of \eqref{eq:jacobi} we have
\[
C'(u) = - \trace\p*{ (\adj (I - uSP)) SP }.
\]
Hence,
\begin{align*}
-u \frac{C'(u)}{C(u)} &= \trace\p*{ \frac{1}{C(u)} (\adj (I - uSP)) uSP}\\
&= \trace\p*{ (I -uSP)^{-1} - I }.
\end{align*}
   %we have (\cite[Corollary 4.7.3]{Stanley2011ECV2124415})
   Thus we get:
    \begin{align*}
        \sum_{n=1}^\infty \sum_{c \in C_n} \wt(c; \mb{s}) u^n = -u \frac{C'(u)}{C(u)}.
    \end{align*}
    Hence,
    \begin{align*}
        u \frac{\D}{\D u} \sum_{n=1}^\infty \frac{1}{n} \sum_{c \in C_n} \wt(c; \mb{s}) u^n = -u \frac{\D}{\D u} \ln C(u),
    \end{align*}
    and we deduce:
    \begin{align*}
        \exp\p*{ - \sum_{n=1}^\infty \frac{1}{n} \sum_{c \in C_n} \wt(c; \mb{s}) u^n } = \det(I - uSP)
    \end{align*}
    by integration, since both sides equal $1$ at $u = 0$. Now, set $u = 1$.
\end{proof}

\subsection{The general formula of Biane and Chapuy.}

Using the determinant expression \eqref{eq:zeta-multi}, we can now emulate the proof of Theorem \ref{thm:main}. As before, $W$ and $W'$ will be used exclusively to refer to strongly connected subsets of $V$.

\begin{theorem}
    Let $S$ be defined as in Theorem \ref{thm:zeta-multi} and $\li{S}$ denote its lifted counterpart, that is, $\li{S}$ is a diagonal matrix with $(t_i, t_i)$ entry equal to $s_i$ for each $t_i \in \li{V}$ that has root $i$. Then,
    \begin{align} \label{eq:sp-formula}
        \frac{\det(I - \li{SP})}{\det(I - SP)} = \prod_{\text{strongly connected } W \subset V} \det((I - SP)^{(V \setminus W)})^{m(W)},
    \end{align}
    where $m(W)$ is defined as in \eqref{eq:m} or in \cite{biane2015laplacian}.
\end{theorem}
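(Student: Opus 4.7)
The plan is to mirror the proof of Theorem~\ref{thm:main} almost verbatim, substituting the vertex-weighted zeta function of Theorem~\ref{thm:zeta-multi} for the single-variable $\zeta_G(s)$. Let $\li{\mb{s}}$ denote the lifted vector of vertex weights on $\li{V}$, so $(\li{\mb{s}})_{t_i} = s_i$ for each $t_i \in \li{V}$ rooted at $i$; this is exactly the diagonal of $\li{S}$. Applying Theorem~\ref{thm:zeta-multi} both to $G$ and to $\li{G}$ identifies the left-hand side of \eqref{eq:sp-formula} with the ratio
\begin{align*}
R(\mb{s}) := \frac{\zeta_G(\mb{s})}{\zeta_{\li{G}}(\li{\mb{s}})} = \frac{\det(I - \li{S}\li{P})}{\det(I - SP)}.
\end{align*}

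Next I would expand $R(\mb{s})$ through the cycle-sum definition of the zeta function. The crucial observation is that for any cycle $\tilde c$ in $\li{G}$ that projects vertex-by-vertex via roots to a cycle $c$ in $G$, one has $\wt(\tilde c; \li{\mb{s}}) = \wt(c; \mb{s})$, since both the edge weights and the lifted vertex weights on $\li{G}$ are defined through roots. Combined with Proposition~\ref{prop:lift}, which says that exactly $k(W)$ cycles of $\li{G}$ project to each $c \in C_W$, this yields the analogue of \eqref{eq:ratio-2},
\begin{align*}
R(\mb{s}) = \exp\left\{ -\sum_{\substack{W \subseteq V \\ W \ne \varnothing}} (k(W) - 1) \sum_{c \in C_W} \frac{\wt(c; \mb{s})}{|c|} \right\}.
\end{align*}
Identity \eqref{eq:m-condition} together with Proposition~\ref{prop:m-m} then lets one convert the inner sum from $C_W$ to $\tilde{C}_W$, the cycles in $G$ using only vertices in $W$, giving
\begin{align*}
R(\mb{s}) = \prod_{\substack{W \subset V \\ W \ne \varnothing}} \exp\left\{ \sum_{c \in \tilde{C}_W} \frac{\wt(c; \mb{s})}{|c|} \right\}^{-m(W)}.
\end{align*}

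Finally, to turn each inner exponential back into a determinant, I would apply Theorem~\ref{thm:zeta-multi} to the induced subgraph of $G$ on $W$: its cycles are precisely $\tilde{C}_W$, and the relevant transition and vertex-weight data are the principal submatrices of $P$ and $S$ obtained by deleting the rows and columns indexed by $V \setminus W$. This produces
\begin{align*}
\exp\left\{ \sum_{c \in \tilde{C}_W} \frac{\wt(c; \mb{s})}{|c|} \right\} = \frac{1}{\det((I - SP)^{(V \setminus W)})},
\end{align*}
and taking the product over strongly connected $W \subset V$ yields \eqref{eq:sp-formula}. The only point requiring genuine care is the root-compatibility identity $\wt(\tilde c; \li{\mb{s}}) = \wt(c; \mb{s})$; once that is in hand, the combinatorics of Proposition~\ref{prop:lift}, Lemma~\ref{l:m}, and Proposition~\ref{prop:m-m} carry over without modification, so I do not anticipate any obstacle beyond bookkeeping.
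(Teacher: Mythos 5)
Your proposal is correct and is essentially the paper's own argument: the paper simply states that the proof of Theorem \ref{thm:main} goes through with $\wt(c)s^{|c|}$ replaced by $\wt(c;\mb{s})$, and singles out the same final step $\exp\{\sum_{c\in\tilde{C}_W}\wt(c;\mb{s})/|c|\} = 1/\det((I-SP)^{(V\setminus W)})$ as following from Theorem \ref{thm:zeta-multi}. Your explicit verification of the root-compatibility identity $\wt(\tilde c;\li{\mb{s}})=\wt(c;\mb{s})$ is the right point to check and is exactly what makes the substitution legitimate.
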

\begin{proof}
    The proof of Theorem \ref{thm:main} goes through as before, with $\wt(c) s^{|c|}$ replaced with $\wt(c; \mb{s})$. In particular, the step
    \begin{align*}
        \exp\left\{ \sum_{c \in \tilde{C}_W} \frac{\wt(c; \mb{s})}{|c|} \right\} = \frac{1}{\det((I - SP)^{(V \setminus W)})}
    \end{align*}
    follows also from Theorem \ref{thm:zeta-multi}.
\end{proof}

By multiplying both sides of \eqref{eq:sp-formula} by $\det(I - SP)$, we may let the product on the right-hand side range over strongly connected subsets $W \subseteq V$. Now, by identifying $p(i, j) = Q(i, j)/(1 - y_i)$ for $i \ne j$, $p(i, i) = 1 - \sum_{k \ne i} p(i, k)$, and $s_i = 1 - y_i$, we obtain $I - SP = I - (I - Q - Y) = H$, whereby we obtain:

\begin{corollary}[Theorem 3.5 in \cite{biane2015laplacian}]
    For a Schr\"{o}dinger matrix $H$ and its lift $\li{H}$,
    \begin{align*}
        \det \li{H} = \prod_{\text{strongly connected } W \subseteq V} \det(H^{(V \setminus W)})^{m(W)},
    \end{align*}
    where $m(W)$ is defined as in \eqref{eq:m} or in \cite{biane2015laplacian}.
\end{corollary}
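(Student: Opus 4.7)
The plan is to obtain the corollary as an immediate consequence of the preceding theorem, via two steps: (i) absorbing a single factor into the right-hand product, and (ii) performing a change of variables that turns $I - SP$ into the Schr\"{o}dinger matrix $H$.

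First, starting from the identity
\begin{align*}
    \frac{\det(I - \li{SP})}{\det(I - SP)} = \prod_{\substack{\text{strongly connected} \\ W \subset V}} \det((I - SP)^{(V \setminus W)})^{m(W)}
\end{align*}
supplied by the theorem, I would multiply both sides by $\det(I - SP)$. Since $m(V) = 1$ and $V \setminus V = \varnothing$, under the natural convention that $(I - SP)^{(\varnothing)} = I - SP$ this extra factor is exactly the missing $W = V$ term in the product. Consequently
\begin{align*}
    \det(I - \li{SP}) = \prod_{\substack{\text{strongly connected} \\ W \subseteq V}} \det((I - SP)^{(V \setminus W)})^{m(W)},
\end{align*}
where the product now ranges over all non-empty strongly connected subsets of $V$.

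Next, I would pass to the Schr\"{o}dinger setting via the substitution $s_i := 1 - y_i$ and, for $i \neq j$, $p(i,j) := x_{(i,j)}/(1 - y_i)$, with $p(i,i) := 1 - \sum_{j \neq i} p(i,j)$ so that the row-sum convention is preserved. Entry-by-entry, the off-diagonal $(i,j)$ entry of $I - SP$ is $-s_i p(i,j) = -x_{(i,j)} = Q(i,j)$, while the diagonal $(i,i)$ entry is
\begin{align*}
1 - s_i p(i,i) = s_i \sum_{j \neq i} p(i,j) + (1 - s_i) = \sum_{j \neq i} x_{(i,j)} + y_i = Q(i,i) + y_i,
\end{align*}
so $I - SP = Q + Y = H$. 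The same substitution, applied to the lifted data, yields $I - \li{SP} = \li{H}$ and $(I - SP)^{(V \setminus W)} = H^{(V \setminus W)}$, since a spanning tree rooted at $i$ carries vertex weight $y_i$ and its outgoing edges in $\li{G}$ carry the weights $x_{(i, \cdot)}$. Substituting into the displayed identity then produces the claimed formula.

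I do not foresee a serious obstacle: the combinatorial heavy lifting is already contained in the preceding theorem, and the Schr\"{o}dinger formulation is recovered by a purely algebraic substitution. The only points requiring a moment's care are verifying that the substitution is valid as an identity of rational functions in the $x_e$ and $y_v$ (the denominators $1 - y_i$ are generically nonzero, so they are harmless), and that the lifting of the diagonal matrix $S$ and the transition matrix $P$ intertwines with the lifting of $H$ under the identification above, which is immediate from the definitions of $\li{S}$, $\li{P}$, and $\li{H}$.
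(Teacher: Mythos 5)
Your proposal is correct and follows essentially the same route as the paper: multiply the identity of the preceding theorem by $\det(I - SP)$ to absorb the $W = V$ factor, then substitute $s_i = 1 - y_i$ and $p(i,j) = x_{(i,j)}/(1-y_i)$ (with row sums preserved) to get $I - SP = Q + Y = H$ and likewise $I - \li{S}\li{P} = \li{H}$. Your entrywise verification of the substitution is in fact slightly more careful than the paper's one-line identification.
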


\bibliographystyle{amsplain}
%\bibliography{references.bib}

\end{document}